\newtheorem{theorem}{Theorem}[section]
\newtheorem{proposition}[theorem]{Proposition}
\newtheorem{lemma}[theorem]{Lemma}
\newtheorem{corollary}[theorem]{Corollary}
\newtheorem*{questions}{Questions}
\theoremstyle{definition}
\newtheorem{definition}[theorem]{Definition}
\theoremstyle{remark}
\newtheorem{remark}[theorem]{Remark}
\newtheorem{example}[theorem]{Example}
\numberwithin{equation}{section}
\crefname{section}{\S\kern -3pt}{\S\S}
\DeclareMathOperator{\Ann}{Ann}
\DeclareMathOperator{\Spec}{Spec}
\DeclareMathOperator{\Tot}{Tot}
\DeclareMathOperator{\Hom}{Hom}
\DeclareMathOperator{\cone}{cone}
\DeclareMathOperator{\Ext}{Ext}
\DeclareMathOperator{\End}{End}
\DeclareMathOperator{\RHom}{\dR Hom}
\DeclareMathOperator{\Perf}{\mathfrak{Perf}}
\DeclareMathOperator{\Bl}{Bl}
\DeclareMathOperator{\gldim}{gl.dim}
\DeclareMathOperator{\Sing}{Sing}
\newcommand{\lperp}[1]{\prescript{\perp}{}{#1}} 
\newcommand{\id}{\mathrm{id}}
\newcommand{\op}{\mathrm{op}}
\newcommand{\pt}{\mathrm{pt}}
\newcommand{\dR}{\mathbf{R}}
\newcommand{\tilX}{{\widetilde X}}
\newcommand{\Z}{\mathbb{Z}}
\newcommand{\C}{\mathbb{C}}
\newcommand{\A}{\mathbb{A}}
\newcommand{\calA}{\mathcal{A}}
\newcommand{\calI}{\mathcal{I}}
\newcommand{\scrC}{\mathscr{C}}
\renewcommand{\P}{\mathbb{P}}
\renewcommand{\O}{\mathcal{O}}
\newcommand{\W}{\mathcal{W}}
\title{A categorical flop in dimension one}
\author{Calum Crossley}
\address{Department of Mathematics, University College London}
\email{calum.crossley.23@ucl.ac.uk}
\begin{document}

\begin{abstract}
    In this note we observe that the categorical structure of a flop occurs for
    some well-known non-commutative resolutions of a nodal curve. We describe
    the flop-flop spherical twists, and give a geometric interpretation in
    terms of Landau--Ginzburg models. The resolutions are all weakly crepant
    but not strongly crepant, and we formulate an intermediate condition that
    distinguishes the smaller ones.
\end{abstract}

\maketitle

\tableofcontents

\section{Introduction}

Non-uniqueness of crepant resolutions is conjectured to be addressed by an
equivalence at the level of derived categories: \cite{BO}, \cite{Kaw}. It is
then natural to ask what algebraic structure on derived categories arises from
a resolution of singularities. One approach is that of \cite{Kuz08}, where
categorical resolutions were defined in the following manner. A resolution
$\pi:\widetilde X\to X$ gives rise to adjoint functors
\begin{equation*}
    \begin{tikzcd}
        \Perf(X) \ar[d,hook] \ar[r,"\pi^*"] &
        D^b(\widetilde X) \ar[dl,"\pi_*"] \\
        D^b(X),
    \end{tikzcd}
\end{equation*}
and when $X$ has rational singularities $\pi_*\circ\pi^*$ is naturally
isomorphic to the identity on $\Perf(X)$. Kuznetsov then defined a
\emph{categorical resolution of singularities} to be a smooth triangulated
category $\scrC$ replacing $D^b(\widetilde X)$ in the above diagram, with
functors $\pi_*$ and $\pi^*$ having these same properties.

It was argued in \cite{KL15} that this definition is reasonable even when $X$
has irrational singularities, where $D^b(\widetilde X)$ actually fails to give
a categorical resolution. The authors constructed an enlargement of
$D^b(\widetilde X)$ which \emph{does} satisfy the definition, even for
non-reduced irrational singularities (in characteristic 0).

As an example, consider the case of a nodal curve $X=\{xy=0\}\subset\A^2$, with
normalization $\widetilde X=\A^1_x\amalg\A^1_y$. The categorical resolution
from \cite{KL15} is equivalent to a module category $D^b(\calA)$ over the
Auslander order $\calA=H^0\End_X(\O_X\oplus\pi_*\O_{\widetilde X})$, introduced
as a resolution for nodal and cuspidal curves in \cite{BD11}. It is well-known
that this resolution is not minimal, containing two exceptional objects which
can be contracted to give (equivalent) smaller resolutions; cf.
\cite[\S3.5]{Kuz16}, \cite[\S4.5]{LP18}.

Our main observation is that these two contractions match the behaviour of the
functors involved in a 3-fold flop, forming a categorical structure which has
already been studied for that reason (e.g. \cite[\S4]{Bar23}). Identical
structures were described for higher-dimensional nodes in
\cite[Proposition 3.15]{Kuz22}, using spinor bundles. We find the same
phenomena occuring for curves, even though the general methods break down;
categorical resolutions extend the behaviour of ``nice'' geometric resolutions
to these more degenerate irrational singularities, supporting the philosophy of
\cite{KL15}.

\subsection{Overview}

The smaller resolutions come from partial versions of $\calA$:
\begin{equation*}
    \calA_x = H^0\End_{\O_X}(\O_X\oplus\pi_*\O_{\A^1_x})
    \quad \text{and} \quad
    \calA_y = H^0\End_{\O_X}(\O_X\oplus\pi_*\O_{\A^1_y}),
\end{equation*}
over which $\calA$ is a roof via certain bimodules:
\begin{equation} \label{eqn:roof}
    \begin{tikzcd}[row sep=tiny]
        & D^b(\calA) \ar[dl] \ar[dr] & \\
        D^b(\calA_x) \ar[dr] & &
        D^b(\calA_y) \ar[dl] \\
        & D^b(X). &
    \end{tikzcd}
\end{equation}
We can then consider the push-pull composition
\begin{equation} \label{eqn:flop}
    \begin{tikzcd}[row sep=tiny]
        & D^b(\calA) \ar[dr] & \\
        D^b(\calA_x) \ar[rr,"\Phi"] \ar[ur] & &
        D^b(\calA_y),
    \end{tikzcd}
\end{equation}
and this turns out to be a derived equivalence. Composing with the reversed
version gives an autoequivalence of $D^b(\calA_x)$, which is the twist around a
spherical object.

For comparison, recall the Atiyah flop over a 3-fold node
$Y=\{uv=st\}\subset\A^4$. This has two crepant resolutions
$V_u=\Bl_{\{u=s=0\}}Y$ and $V_v=\Bl_{\{v=s=0\}}Y$, with the roof $V=\Bl_0Y$
giving a derived equivalence:
\begin{equation*}
    \begin{tikzcd}[row sep=tiny]
        & D^b(V) \ar[dr,"(\pi_v)_*"] & \\
        D^b(V_u) \ar[rr,"\simeq"] \ar[ur,"(\pi_u)^*"] & &
        D^b(V_v).
    \end{tikzcd}
\end{equation*}
The flop-flop autoequivalence on $D^b(V_u)$ is the twist around a spherical
object, given by a line bundle on the exceptional divisor.

\begin{remark}
    One interpretation of $\calA_x$, $\calA_y$ and $\calA$ is as
    ``non-commutative blowups''. For a ring $R$ and ideal $I$ one can define
    such a thing by the algebra $\End_R(R\oplus I)$, as in
    \cite[\S R]{Leu12}. We apply this to the two branches of the node and also
    their intersection, in analogy with $V_u$, $V_v$ and $V$.
\end{remark}

\begin{remark}
    As described in \cite{BKS} for the geometric situation, these diagrams of
    categories can be seen as giving examples of perverse schobers on $\C$ with
    one marked point, which categorify perverse sheaves on the hyperplane
    arrangement $(\C,0)$.
\end{remark}

\subsection{Landau--Ginzburg models}

The above resolutions are also equivalent to categories of B-branes on partial
compactifications of a Landau--Ginzburg model, from the ``exoflop''
construction of \cite{Asp15}. These LG-models were already described in the
context of mirror symmetry in \cite{LP23}, and we briefly sketch an A-side
picture of the flop in \cref{subsec:wrap}. The geometry of these LG-models
suggests a version of the flop for \emph{smooth} curves using root stacks,
which we outline in \cref{rmk:root}.

It also explains the crepancy properties of the resolutions. Kuznetsov defined
weak and strong notions of crepancy for categorical resolutions, both
equivalent to usual crepancy when $\scrC=D^b(\widetilde X)$. They are
characterized by the behaviour of a relative Serre functor on the resolution,
which for the LG-models is given by the canonical bundle. Strong crepancy asks
for the canonical bundle to be globally trivial, while weak crepancy only
requires it to be trivial on a certain open set, so categorical resolutions
from compactifications of LG-models are typically weakly crepant; see
\cite{FK17}.

In particular, all the above resolutions (including the roof) are weakly
crepant. On the other hand, they are not strongly crepant; the partial
compactifications are not Calabi--Yau. This confirms that weak crepancy is too
weak to ensure minimality of the resolution, since $D^b(\calA)$ is clearly not
minimal. Kuznetsov conjectured that strongly crepant resolutions are minimal,
although here we see that what is presumably the minimal resolution,
$D^b(\calA_x)$, fails to be strongly crepant.

Motivated by this, we introduce an intermediate condition (\cref{defn:fair}):
that the kernel subcategory $\ker(\pi_*)$ is Calabi--Yau with respect to the
relative Serre functor, i.e. the relative Serre functor restricts to a
homological shift on $\ker(\pi_*)$. This is strictly in-between weak and strong
crepancy, and seems effective at identifying minimal resolutions.
Optimistically, one could conjecture that it identifies minimal categorical
resolutions in general; see \cref{subsec:fair} for further discussion.

\subsection{Generalizations}

The algebra $\calA$ was shown to give a categorical resolution for any nodal
or cuspidal curve singularity in \cite{BD11}, and we show that a diagram with
the same properties as \cref{eqn:roof} exists for any nodal curve in
\cref{subsec:gen}. As mentioned above, the roof also has a geometric analogue
using root stacks (\cref{rmk:root}).

It would be nice to unify our results with \cite[Proposition 3.15]{Kuz22},
including for example the Atiyah flop. A preliminary remark in this direction:
$\calA$ and $\calA_x$ seem to fit into the higher Auslander--Reiten theory of
\cite[\S3]{Iya08} as 1- and 2-Auslander algebras, so a good understanding of
the results in \cite{BIKR08} may suffice to conclude that similar pictures
exist for simple curve singularities of type $A_n$ with $n$ odd and $D_m$ with
$m$ even---perhaps also in higher dimensions.

\subsection*{Acknowledgements}

I thank my supervisor Ed Segal for his invaluable guidance and encouragement,
without which this note would not have been written. I am also grateful to
Yankı Lekili for a number of helpful conversations, and to the anonymous
referee for useful suggestions of improvements to the exposition.

{\small This work was supported by the Engineering and Physical Sciences Research
Council [EP/S021590/1], the EPSRC Centre for Doctoral Training in Geometry and
Number Theory (The London School of Geometry and Number Theory), University
College London.}

\subsection*{Conventions}

We work over $\C$. Functors will be derived without changing notation. Modules
are \emph{right} modules, and quiver representations are \emph{contravariant}.

\section{Background}

We recall some of \cite{Kuz08}. Experienced readers may prefer to skip straight
to \cref{sec:flop}.

\begin{definition}
    A \emph{weak categorical resolution of singularities} for a variety $X$ is
    a (dg-enhanced) triangulated category $\scrC$ with (quasi-)functors
    \begin{equation*}
        \begin{tikzcd}
            \Perf(X) \ar[d,hook,"\iota"'] \ar[r,"\pi^*"] &
            \scrC \ar[dl,"\pi_*"] \\[-0.7em]
            D^b(X),
        \end{tikzcd}
    \end{equation*}
    where $\iota$ is the inclusion $\Perf(X)\subseteq D^b(X)$, satisfying three
    conditions:
    \begin{enumerate}
        \item \label{itm:adjoint}
            $\pi^*$ and $\pi_*$ are adjoint, in the sense that
            \begin{equation*}
                \Hom_{\scrC}(\pi^*(-),-)
                    \simeq \Hom_{D^b(X)}(\iota(-),\pi_*(-))
            \end{equation*}
            as (quasi-)functors on $\Perf(X)^\op\otimes\scrC$,

        \item $\pi_*\circ\pi^*\simeq\iota$,
            so that $\pi^*$ is fully faithful, and

        \item $\scrC$ is smooth.
    \end{enumerate}
    It is \emph{weakly crepant} if $\pi^*$ is also a \emph{right} adjoint for
    $\pi_*$ in the sense of (\ref{itm:adjoint}).
\end{definition}

\begin{remark}
    We say $\scrC$ is \emph{smooth} if $\scrC\simeq\Perf(A)$ where $A$ is a
    smooth dg-algebra, i.e. the diagonal bimodule $A$ is a perfect complex over
    $A^e=A^\op\otimes_\C A$. This is independent of choices, and equivalent to
    $\gldim{}<\infty$ if $A=H^0(A)$; \cite[\S3]{Lun10}.
\end{remark}

\begin{example}
    If $\pi:\widetilde X\to X$ is a geometric resolution of a variety $X$ with
    \emph{rational} singularities, meaning that
    $\O_X\simeq\dR\pi_*\O_{\widetilde X}$, then $D^b(\widetilde X)$ with the
    usual functors $\pi^*$ and $\pi_*$ gives a categorical resolution of $X$ by
    the projection formula. If $\pi$ is also crepant, then the categorical
    resolution is weakly crepant; the adjoints to $\pi_*$ differ iff the
    relative canonical bundle is non-trivial:
    $\pi^!=\omega_{\widetilde X/X}\otimes\pi^*$.
\end{example}

In \cite{Lun10} it is proved in broad generality that the category $D^b(X)$
is smooth. It follows that $\scrC=D^b(X)$, $\pi^*=\iota$,
$\pi_*=\id_{D^b(X)}$ is a universal weakly crepant categorical resolution
unless we make further restrictions on the behaviour of $\scrC$.

\begin{example} \label[example]{ex:koszul}
    For $X=\Spec\C[w]/w^2$ we have a strong generator
    $D^b(X)=\langle\O_X/w\rangle$, so $D^b(X)\simeq\Perf(A)$ for
    $A=\Ext^*_X(\O_X/w,\O_X/w)=\C[\theta]$ with $|\theta|=1$. This is classical
    Koszul duality. Here $A$ is smooth, but with unbounded cohomology; it fails
    to give a relatively proper resolution, being far from finite over
    $\C[w]/w^2$.
\end{example}

Various strengthenings of the definition exist. A key idea is that the category
$\scrC$ should be local on $X$, formalized by requiring $\scrC$ to be a module
for the monoidal category $\Perf(X)$---in the affine case just an
$\O_X$-linear dg-category with $(\pi^*,\pi_*)$ being $\O_X$-linear---as
introduced in \cite{Kuz08}. This is one step towards having the structure of a
non-commutative resolution \`a la Van den Bergh \cite{VdB}.

With this structure one obtains relative versions of properties of $\C$-linear
derived categories, by using the $\O_X$-module structure on $\Hom_\scrC(x,y)$.
For example $\scrC$ is proper relative to $X$ if the $\O_X$-linear chain
complex $\Hom_\scrC(x,y)$ gives an element of $D^b(X)$ for all $x,y\in\scrC$.

\begin{definition}
    Assume $X$ is affine and Gorenstein. We will call categorical resolutions
    which are $\O_X$-linear and relatively proper in the above sense
    \emph{strong categorical resolutions}. In this setting $S:\scrC\to\scrC$ is
    a \emph{relative Serre functor} for $\scrC$ if we have a natural
    isomorphism
    \begin{equation*}
        \Hom_\scrC(x,Sy) \simeq \Hom_\scrC(y,x)^\vee
            \coloneqq \Hom_{D^b(X)}(\Hom_\scrC(y,x),\O_X).
    \end{equation*}
    Kuznetsov defined $\scrC$ to be \emph{strongly crepant} if the identity is
    a relative Serre functor.
\end{definition}

\begin{remark} \label[remark]{rmk:weak}
    By \cite[Lemma 3.6]{Kuz08} we then get $S\circ\pi^*$ as a right adjoint for
    $\pi_*$, so weak crepancy of a strong categorical resolution is equivalent
    to $S$ restricting to the identity on $\pi^*(\Perf(X))$.
\end{remark}

\begin{example}
    If $\pi:\widetilde X\to X$ is a resolution of rational singularities, then
    $D^b(\widetilde X)$ is a strong categorical resolution with relative Serre
    functor $\omega_{\widetilde X/X}\otimes -$, and hence is strongly crepant
    iff $\pi$ is a crepant resolution. In this case strong and weak crepancy
    are equivalent.
\end{example}

\begin{example} \label[example]{ex:kron}
    Revisiting $X=\Spec\C[w]/w^2$, there is a strong categorical resolution
    given by the graded Kronecker quiver
    \begin{equation*}
        Q = \left(\begin{tikzcd}
            \bullet_0
                \ar[r,shift left,dashed,"\theta"] \ar[r,shift right,"w"'] &
            \bullet_1
        \end{tikzcd}\right),
    \end{equation*}
    where $|\theta|=1$.\footnote{This is smooth, being a regrading of the
    Beilinson quiver for $\P^1$ (which is smooth). It compactifies the
    $\A^1_\theta$ of \cref{ex:koszul}. Failure of strong crepancy is no
    surprise; $\P^1$ is not Calabi--Yau.} We can embed $\Perf(X)$ as the
    subcategory generated by
    \begin{equation*}
        G = \cone(P_0[-1]\xrightarrow{\theta}P_1) =
        \bigl(\begin{tikzcd}
                \C & \C \ar[l,shift left,"1"] \ar[l,shift right,dashed,"0"']
        \end{tikzcd}\bigr),
    \end{equation*}
    since $\RHom(G,G)=\C[w]/w^2$. This quiver is derived equivalent to the
    Auslander algebra $H^0\End_X(\O_X\oplus\O_X/w)$, coming from a full
    exceptional collection described in \cite[Example 5.16]{KL15} and
    \cite[\S3]{KS23}. It hence has an $\O_X$-linear structure, giving a strong
    categorical resolution with a relative Serre functor $S$ as in
    \cref{eqn:serre}. One can compute that $S(G)=G$, so the resolution is
    weakly crepant, but it is not strongly crepant since
    $\Hom(P_1,P_0)=0\ne\Hom(P_0,P_1)$.
\end{example}

\section{Flops and crepancy} \label{sec:flop}

We return to the notation from the introduction, where $X=\{xy=0\}\subset\A^2$.
To aid computations, write $\calA$ as the path algebra of a quiver with
relations:
\begin{equation*}
    \calA = \C\left(\begin{tikzcd}[row sep=small]
        & \bullet_{\A^1_x} \ar[dl,shift left,bend right,"i_x"] \\
        \bullet_X \ar[ur,shift left,bend left,"q_x"]
            \ar[dr,shift right,bend right,"q_y"'] \\
        & \bullet_{\A^1_y} \ar[ul,shift right,bend left,"i_y"']
    \end{tikzcd} \quad \middle/ \; q_xi_y = 0 = q_yi_x \right).
\end{equation*}
Then $\calA_x$ and $\calA_y$ are obtained by omitting vertex projectives:
\begin{align*}
    \calA_x &= \End_\calA(P_X\oplus P_x), &
    \calA_y &= \End_\calA(P_X\oplus P_y).
\end{align*}
The functors of \cref{eqn:roof} come from the bimodules $P_X\oplus P_x$ and
$P_X\oplus P_y$, together with $P_X$ mapping down to $D^b(X)$.

It follows that $\Perf(\calA_x)$ and $\Perf(\calA_y)$ are identified with the
subcategories
\begin{equation*}
    \lperp S_y \coloneqq \{M\in\Perf(\calA):\RHom(M,S_y)=0\}
    \quad \text{and} \quad \lperp S_x \text{ ($\coloneqq$ similar)}
\end{equation*}
orthogonal in $\Perf(\calA)$ to simple modules at the omitted vertices. These
simple modules have the following projective resolutions:
\begin{align} \label{eqn:res}
    P_y\xrightarrow{i_y}P_X\xrightarrow{q_x}P_x&\to S_x, &
    P_x\xrightarrow{i_x}P_X\xrightarrow{q_y}P_y&\to S_y,
\end{align}
from which one computes that they are exceptional objects, so we will refer to
them as $E_x\coloneqq S_x$ and $E_y\coloneqq S_y$. It follows that $\lperp E_x$
and $\lperp E_y$ are admissible subcategories of $D^b(\calA)$, defined by the
semi-orthogonal decompositions
\begin{equation*}
    D^b(\calA) = \langle E_x,\lperp E_x\rangle = \langle E_y,\lperp E_y\rangle.
\end{equation*}

\begin{proposition}
    The categories $D^b(\calA)$, $D^b(\calA_x)$ and $D^b(\calA_y)$ give strong
    categorical resolutions of $X$ with relative Serre functors.
\end{proposition}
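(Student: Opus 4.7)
The plan is to verify the conditions of \cref{defn:strong} for each category in turn, reducing almost everything to standard facts about module-finite algebras over a Gorenstein base.

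First I would note that $\calA$ is module-finite over $\O_X$: under the identification $\O_X\simeq e_X\calA e_X$, the projectives $P_X, P_x, P_y$ are each f.g.\ over $\O_X$ (with $P_x, P_y$ rank one and $P_X$ a fattened version at the node). This makes $D^b(\calA)$ naturally $\O_X$-linear and relatively proper, since $\Hom_\calA(M,N)$ between f.g.\ modules is then a coherent $\O_X$-module. The same holds for $\calA_x,\calA_y$, which are idempotent subalgebras of $\calA$.

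Next comes smoothness, i.e.\ finite global dimension. The resolutions (\ref{eqn:res}) show $E_x,E_y$ are perfect of projective dimension~$2$; the remaining simple $S_X$ has a length-one resolution $P_x\oplus P_y\to P_X\to S_X$ cut out by the relations, so $\gldim\calA\le 2$. For $\calA_x=\End_\calA(P_X\oplus P_x)$, the relevant simples likewise admit finite projective resolutions (one obtains them from the $\calA$-side resolutions after right-orthogonal projection away from $E_y$), and analogously for $\calA_y$. The categorical resolution axioms for $\pi^*$ and $\pi_*$ follow formally: $\pi^*$ is tensor product with $P_X$, $\pi_*$ is the vertex-$X$ projection, and $\pi_*\pi^*\simeq\iota$ amounts to $e_X\calA e_X\simeq\O_X$; adjointness is the tensor-Hom adjunction.

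For the existence of a relative Serre functor, I would invoke the standard fact that for a module-finite $\O_X$-algebra $B$ of finite global dimension over an affine Gorenstein $X$ (with $\omega_X\simeq\O_X$ here since $X$ is a complete intersection), the formula
\begin{equation*}
    S_B(M) \;=\; M\otimes^L_B B^\vee,\qquad B^\vee\coloneqq\RHom_{\O_X}(B,\O_X),
\end{equation*}
defines a relative Serre functor on $D^b(B)$; this is a routine consequence of Grothendieck--Serre duality for the finite affine morphism $\Spec B\to X$ together with the characterization of the Serre functor by the duality $\Hom(M,SN)\simeq\Hom(N,M)^\vee$. Applying this to $B=\calA,\calA_x,\calA_y$ produces the desired Serre functors.

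The only real subtlety is verifying the Serre-duality formula is well-defined on the bounded derived category, which comes down to $B^\vee$ being a perfect $B$-bimodule --- guaranteed by finite global dimension together with module-finiteness over the Gorenstein ring $\O_X$. I expect no serious obstacle; the bulk of the work is bookkeeping with the quiver conventions to identify $B^\vee$ explicitly, which is presumably done in the formula referenced as (\ref{eqn:serre}) in the sequel.
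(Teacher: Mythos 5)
Your proposal reaches the same conclusion and shares the overall structure of the paper's argument (verify $\O_X$-linearity and relative properness, then smoothness, then write down the Serre functor as a dualizing bimodule), so the proposal is essentially correct, but it differs in two places worth flagging.

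For smoothness of $D^b(\calA)$, you compute $\gldim\calA\le 2$ directly from the projective resolutions of the simples, whereas the paper just cites \cite[Theorem~2.6]{BD11}. Your direct computation is fine --- the claimed resolution $0\to P_x\oplus P_y\to P_X\to S_X\to 0$ does hold, since $(x)\cap(y)=0$ in $\C[x,y]/xy$ and $(P_x)_y=(P_y)_x=0$ thanks to the relations $q_xi_y=q_yi_x=0$. The real gap is your treatment of $\calA_x$ and $\calA_y$: ``one obtains \[the resolutions\] from the $\calA$-side resolutions after right-orthogonal projection away from $E_y$'' is not an argument. The projection $D^b(\calA)\to{}\lperp E_y$ is not exact on the abelian level, does not send a projective resolution to a projective resolution, and does not in itself bound global dimension. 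The paper sidesteps this cleanly by invoking \cite[Theorem~3.24]{LS14}: an admissible subcategory of a smooth category is smooth. If you wanted to stay elementary you could instead compute resolutions of the $\calA_x$-simples by hand (e.g.\ by applying $\Hom_\calA(P_X\oplus P_x,-)$ to suitable $\calA$-complexes and checking acyclicity), but as written this step is missing.

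For the Serre functor, your formula $S(M)=M\otimes^L_B B^\vee$ with $B^\vee=\RHom_{\O_X}(B,\O_X)$ is the same functor as the paper's $S=\Hom_X(\Hom_\calA(-,\calA),\O_X)$ once $M$ is perfect, but the justification differs: you appeal to Grothendieck--Serre duality for $\Spec B\to X$, whereas the paper verifies the Serre duality isomorphism directly by a two-line tensor--hom adjunction, which is more self-contained and avoids any subtlety about which form of duality applies to non-commutative $B$. Either route works; the paper's is the more economical one to write down.
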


\begin{proof}
    We have $\gldim(\calA)=2$ by \cite[Theorem 2.6]{BD11}, so
    $\Perf(\calA)=D^b(\calA)$ is smooth by \cite[Proposition 3.8]{Lun10}.
    The admissible subcategories $\Perf(\calA_{x/y})$ are also smooth by
    \cite[Theorem 3.24]{LS14}, and hence $\Perf(\calA_{x/y})=D^b(\calA_{x/y})$.
    They give $\O_X$-linear categories, which are relatively proper as
    categorical resolutions since these are coherent $\O_X$-algebras.

    Serre functors for such algebras are given by a formal construction. We
    define
    \begin{equation} \label{eqn:serre}
        S : D^b(\calA)
            \xrightarrow{\Hom_\calA(-,\calA)} D^b(\calA^\op)
            \xrightarrow{\Hom_X(-,\O_X)} D^b(\calA),
    \end{equation}
    which is a Serre functor by tensor-hom adjunction:
    \begin{align*}
        \Hom_X(\Hom_\calA(M,N),\O_X)
            &\simeq \Hom_X(N\otimes_\calA\Hom_\calA(M,\calA),\O_X) \\
            &\simeq \Hom_\calA(N,\Hom_X(\Hom_\calA(M,\calA),\O_X)).
    \end{align*}
    The same construction works over $\calA_x$ and $\calA_y$.
\end{proof}

\begin{remark} \label[remark]{rmk:serre}
    From \cref{eqn:res} we find $S(E_x)=E_y[1]$, $S(E_y)=E_x[1]$. Moreover
    \begin{equation*}
        \Hom_\calA(-,E_{x/y})^{\vee_X} = \Hom_\calA(-,E_{x/y})^{\vee_\C}[-1],
    \end{equation*}
    since $\Hom_X(\O_0,\O_X)=\O_0[-1]$, so $S[1]$ is close to being a Serre
    functor over $\C$.
\end{remark}

\subsection{Spherical twists} \label{subsec:flop}

The functor $\Phi$ of \cref{eqn:flop} is the restriction to $\lperp E_y$ of the
projection to the admissible subcategory $\lperp E_x$ in $D^b(\calA)$. It is an
equivalence:

\begin{lemma} \label[lemma]{prop:twist}
    The functor $F:D^b(\pt)\oplus D^b(\pt)\to D^b(\calA)$ mapping to the two
    exceptional objects $E_x$ and $E_y$ is spherical, and $\Phi$ is obtained by
    restricting the associated dual twist $T^{-1}:D^b(\calA)\to D^b(\calA)$ to
    $\lperp E_y$.
\end{lemma}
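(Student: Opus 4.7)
The plan has two parts: identifying $\Phi$ with the restriction of $T^{-1}$, and showing $F$ is spherical.

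I would start with the identification, which is concrete. The functor $F$ admits adjoints $R(M) = (\Hom_\calA(E_x, M), \Hom_\calA(E_y, M))$ and $L(M) = (\Hom_\calA(M, E_x)^\vee, \Hom_\calA(M, E_y)^\vee)$, where $(-)^\vee$ denotes $\C$-linear duality. By the standard definition of the dual twist, there is a triangle $T^{-1}(M) \to M \to FL(M)$ with
\begin{equation*}
FL(M) = E_x \otimes \Hom_\calA(M, E_x)^\vee \,\oplus\, E_y \otimes \Hom_\calA(M, E_y)^\vee.
\end{equation*}
For $M \in \lperp E_y$ the second summand vanishes, leaving $T^{-1}(M) \to M \to E_x \otimes \Hom_\calA(M, E_x)^\vee$. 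A direct Hom computation identifies $E_x \otimes \Hom_\calA(M, E_x)^\vee$ with the projection of $M$ onto the right-hand factor of the semi-orthogonal decomposition $D^b(\calA) = \langle \lperp E_x, \langle E_x\rangle\rangle$, so the displayed triangle is precisely the one that defines $\Phi(M)$ as the projection onto $\lperp E_x$. Hence $T^{-1}|_{\lperp E_y} = \Phi$.

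For sphericality I would apply the Anno--Logvinenko criterion: the cotwist $C$, defined as a shifted cone of the unit $\id \to RF$, should be an autoequivalence of $D^b(\pt) \oplus D^b(\pt)$, and $R$ should agree with a shift of $CL$. Using the projective resolutions \cref{eqn:res}, one computes that each $E_i$ is exceptional and $\Ext^2(E_x, E_y) = \Ext^2(E_y, E_x) = \C$, with all other off-diagonal Ext-groups vanishing. Thus $RF(a, b) = (a \oplus b[-2], a[-2] \oplus b)$, and the cotwist reduces to a shift of the swap of the two $D^b(\pt)$-factors---manifestly an autoequivalence. The compatibility between $R$ and a shift of $CL$ then reduces to a natural isomorphism $\Hom_\calA(E_x, M) \simeq \Hom_\calA(M, E_y)^\vee[-2]$, which follows by combining the relative Serre identity $S(E_x) = E_y[1]$ with the shift $\Hom_X(-, \O_X) = \Hom_\C(-, \C)[-1]$ on torsion sheaves at the origin, both recorded in \cref{rmk:serre}.

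The main obstacle is this final compatibility, where the subtlety is reconciling the $\C$-linear spherical-functor formalism with the relative ($\O_X$-linear) Serre duality on $D^b(\calA)$, a category that is not proper over $\C$ and therefore has no absolute Serre functor globally. One has to combine the relative Serre computation with the extra $[-1]$ coming from $\Hom_X(\O_0, \O_X) = \O_0[-1]$ to land at the precise shift required by the cotwist, and upgrade the resulting values-level isomorphism to a genuine natural transformation of functors.
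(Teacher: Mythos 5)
Your proposal follows essentially the same route as the paper: you compute the cotwist using \cref{eqn:res} and \cref{rmk:serre} to see it is a shift of the swap, establish $R \simeq CL[\text{shift}]$ by combining relative Serre duality with the extra shift $\Hom_X(\O_0,\O_X)=\O_0[-1]$, invoke the Anno--Logvinenko two-out-of-four theorem, and then identify $T^{-1}|_{\lperp E_y}$ with the projection to $\lperp E_x$ via the explicit triangle $T^{-1}(G)\to G\to FL(G)$. The paper presents the sphericality argument first and the identification of $\Phi$ last, but the ingredients and computations coincide; the subtlety you flag at the end (reconciling the $\C$-linear spherical-functor formalism with only a \emph{relative} Serre functor) is exactly the point the paper handles through the bimodule-level construction of $S$ in \cref{eqn:serre}.
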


\begin{figure}[ht]
    \centering
    \begin{equation*}
        \begin{tikzcd}
            \bullet \ar[d,mapsto,"E_x"'] & \bullet \ar[d,mapsto,"E_y"] \\
            \bullet \ar[r,bend left,dashed,"+2"] &
            \bullet \ar[l,bend left,dashed,"+2"]
        \end{tikzcd}
    \end{equation*}
    \caption{The spherical functor $F$.}
\end{figure}
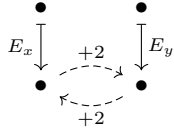

\begin{proof}
    Its right adjoint is $R=\Hom(E_x,-)\oplus\Hom(E_y,-)$, and as a consequence
    of \cref{rmk:serre} there is a left adjoint $L=RS[1]$. The cotwist
    $C=\cone(1\to RF)[-1]$ is the following bimodule over $\C\oplus\C$:
    \begin{equation*}
        \begin{bmatrix}
            0 & \Hom(E_x,E_y) \\
            \Hom(E_y,E_x) & 0
        \end{bmatrix} = \begin{bmatrix}
            0 & \C[-2] \\ \C[-2] & 0
        \end{bmatrix}.
    \end{equation*}
    This is a shift of the symmetry swapping the two points---an
    autoequivalence---and satisfies $R=CL[1]$ because
    \begin{align*}
        \Hom(E_x,-) &= \Hom(-,S(E_x))^{\vee_X} \\
                    &= \Hom(-,E_y[1])^{\vee_X} = \Hom(E_y,S(-))[-1].
    \end{align*}
    Hence $F$ is spherical by the two-out-of-four theorem
    \cite[Theorem 1.1]{AL17}.

    It follows that the twist $T=\cone(FR\to1)$ is an autoequivalence, with
    inverse the dual twist $T^{-1}=\cone(1\to FL)[-1]$. As $L=RS[1]$, this is
    given by
    \begin{equation*}
        T^{-1}(G) = \cone\biggl(G \to
            \bigl(\Hom(G,E_x)^{\vee_\C}\otimes E_x\bigr)
            \oplus\bigl(\Hom(G,E_y)^{\vee_\C}\otimes E_y\bigr)\biggr)[-1],
    \end{equation*}
    which agrees with the projection to $\lperp E_x$ when $G\in\lperp E_y$.
\end{proof}

Hence $\Phi$ is an equivalence, as $T^{-1}$ is an autoequivalence. By symmetry
the same functor $T^{-1}$ also gives the other direction of the flop. We write
$\Phi=T^{-1}$ for this extension of both functors as a slight abuse of
notation.

By \cite[Theorem 4.1.3]{Bar23} there is a simple way of expressing the
flop-flop autoequivalence $\Phi^2|_{D^b(\calA_x)}$ as a spherical twist. Since
$D^b(\calA_x)\subset D^b(\calA)$ is the complement of an exceptional object, it
is the dual twist around a spherical object: the image of $E_x$ in
$\lperp E_y=D^b(\calA_x)$. Similar holds for $D^b(\calA_y)$. Explicitly, these
spherical objects are $F_x\coloneqq\cone(E_x[-1]\to E_y[1])$ and
$F_y\coloneqq\cone(E_y[-1]\to E_x[1])$.

\begin{remark} \label[remark]{rmk:crep}
    The kernel of $D^b(\calA)\to D^b(X)$ is generated by $E_x$ and $E_y$, and
    its image in $D^b(\calA_x)$ is the kernel of $D^b(\calA_x)\to D^b(X)$,
    which is therefore just $\langle F_x\rangle$. This is in line with results
    for other nodal singularities: \cite{Sun22}, \cite{CGL23}, \cite{KS23}.

    Here $F_x$ is 3-spherical, even though $X$ is 1-dimensional, and the
    disparity means that $D^b(\calA_x)$ cannot be strongly crepant. On the
    other hand $D^b(\calA)$ and $D^b(\calA_x)$ are weakly crepant, since
    the kernels are Serre invariant; \cite[Lemma 5.8]{KS25}.
\end{remark}

\begin{remark}
    Despite being spherical in their respective subcategories, $F_x$ and $F_y$
    are not spherical in $D^b(\calA)$. Indeed, we find $S(F_x)=F_y[1]$; this is
    not a shift of $F_x$. They satisfy the sphere-cohomology part of the
    definition of a spherical object, but fail the Serre-invariance condition.
\end{remark}

\begin{remark} \label[remark]{rmk:spher}
    It turns out that $\Phi=S$. This is almost certainly a formal consequence
    of $S(E_x)=E_y[1]$ and $S(E_y)=E_x[1]$, together with the description of
    $\Phi$ in terms of mutations, but we have not been able to find a proof in
    that vein. It can be deduced indirectly from the equivalence with
    Landau--Ginzburg models outlined in \cref{sec:lg}, by matching
    \cref{rmk:div} with \cref{prop:twist}. A rather involved proof of the
    analogous claim for the nodal cubic was given in \cite[\S3]{Sun22}.

    One consequence is that the Serre functor on $D^b(\calA_x)$ is the
    restriction of $S^2$, which explains how $F_x$ becomes spherical in the
    subcategory: $S^2(F_x)=F_x[2]$.
\end{remark}

\subsection{Crepancy} \label{subsec:fair}

In view of Remarks \ref{rmk:serre} and \ref{rmk:crep}, we make the following
definition.

\begin{definition} \label[definition]{defn:fair}
    Suppose $(\scrC,\pi_*,\pi^*)$ is a strong categorical resolution of $X$,
    with a relative Serre functor $S$. We say it is \emph{fairly crepant} if
    $S(\ker(\pi_*))\subset\ker\pi_*$ and there is some $n$ such that
    $S|_{\ker(\pi_*)}=[n]$. We call $n$ the \emph{index of crepancy}.
\end{definition}

Recall that strong crepancy was the condition $S=\id_\scrC$, while weak
crepancy is the condition that $S(\ker(\pi_*))\subset\ker(\pi_*)$ by
\cite[Lemma 5.8]{KS25}, provided we revise our definition of ``strong
categorical resolution'' to also require $\pi_*$ to be a categorical
contraction \cite[Definition 5.1]{KS25}. It is then clear that strong crepancy
implies fair crepancy with index 0, and fair crepancy of any index implies weak
crepancy. The reverse implications do not hold in general.

\begin{example} \label[example]{ex:fairzero}
    Consider the categorical resolution of $X=\Spec\C[w]/w^2$ given by the
    graded Kronecker quiver from \cref{ex:kron}. The kernel is generated by
    $\cone(P_0[-d]\xrightarrow{w}P_1)$, which is a $2$-spherical object, and
    hence this resolution is fairly crepant of index $2$. For a non-trivial
    grading $|w|=d$ we can still consider categorical resolutions of
    $\C[w]/w^2$, as the definitions given above for affine schemes generalize
    immediately to graded algebras. After setting $|\theta|=1-d$, the graded
    Kronecker quiver gives a categorical resolution which is fairly crepant of
    index $2-d$. This is fairly crepant of index 0 when $d=2$, but never
    strongly crepant by the argument from \cref{ex:kron}. Hence fair crepancy
    of index 0 is weaker than strong crepancy, at least in this more general
    context. (We are not aware of a geometric example of the distinction; cf.
    Remarks \ref{rmk:nodes} and \ref{rmk:cusps}).
\end{example}

\begin{example}
    From \cref{rmk:serre} we have that $D^b(\calA)$ is weakly crepant but not
    fairly crepant, while $D^b(\calA_x)$ is fairly crepant of index 2 from
    \cref{rmk:crep} (cf. \cref{rmk:spher}). The index is the discrepancy
    between the dimension of the spherical object in the kernel (in this case
    3) and that of the base variety $X$ (in this case 1).
\end{example}

If we assume that $\scrC$ is suitably birational, so that $\Hom_\scrC(x,y)$ for
$x,y\in\ker(\pi_*)$ is a module supported at the singularity in $X$, then for
an isolated Gorenstein singularity the relative Serre functor on $\ker(\pi_*)$
is a shift of the absolute Serre functor (as in \cref{rmk:serre}). In this
situation fair crepancy of index $n$ is equivalent to requiring that
$\ker(\pi_*)$ is $(n+\dim X)$-Calabi--Yau, implying that $\ker(\pi_*)$ has no
non-trivial semi-orthogonal decompositions. The resolution then cannot be made
smaller via semi-orthogonal decomposition, assuming $\scrC$ is connected, so we
find it reasonable to expect that fairly crepant categorical resolutions do not
factor non-trivially through further categorical resolutions. Note that
suitably ``minimal'' categorical resolutions are conjectured to be contained in
all other (nice enough) categorical resolutions \cite{BO}, and in particular
should be unique, although the correct definitions to ensure this are unclear.

\begin{questions}
    \leavevmode 
    \begin{enumerate}
        \item Does the index of crepancy reduce to a known numerical invariant?
        \item In what generality do fairly crepant categorical resolutions
            exist?
        \item Are they universal minimal resolutions? In particular, are they
            unique?
    \end{enumerate}
\end{questions}

Assuming uniqueness, non-vanishing of the index of crepancy would be a
sufficient non-existence criterion for (geometric) crepant resolutions of
rational singularities, and also other strongly crepant categorical
resolutions, such as NCCRs. We find results consistent with this prediction in
Remarks \ref{rmk:nodes} and \ref{rmk:cusps}.

\subsection{General nodal curves} \label{subsec:gen}

It was shown in \cite{BD11} that $\calA$ gives a categorical resolution for any
nodal or cuspidal curve. We briefly note that exceptional objects with the same
properties as $E_x$ and $E_y$ exist in $D^b(\calA)$ for all nodal curves.

Indeed, suppose $X$ is a curve with a node at $p$, and take the normalization
$\pi:\widetilde X\to X$. Then $\pi^{-1}(p)$ consists of two reduced points
$q_1$ and $q_2$, and these give simple modules $E_1=\pi_*\O_{q_1}$ and
$E_2=\pi_*\O_{q_2}$ over $\calA=\End_X(\O_X\oplus\pi_*\O_{\widetilde X})$.

\begin{proposition}
    We have the following:
    \begin{enumerate}
        \item \label{itm:exc}
            $E_1$ and $E_2$ are exceptional objects in $D^b(\calA)$.
        \item \label{itm:serre}
            $S(E_1)=E_2[1]$ and $S(E_2)=E_1[1]$, where $S$ is as in
            \cref{eqn:serre}.
    \end{enumerate}
    In particular, the results of \cref{subsec:flop} apply to
    $\lperp E_1,\lperp E_2\subset D^b(\calA)$.
\end{proposition}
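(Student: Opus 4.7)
The idea is to reduce both assertions to the local computation already performed for $E_x, E_y$ in \cref{eqn:res} and \cref{rmk:serre}. The modules $E_1 = \pi_*\O_{q_1}$ and $E_2 = \pi_*\O_{q_2}$ are skyscraper sheaves at $p$ when viewed as $\O_X$-modules, so the Hom-complexes $\Hom_\calA(E_i, E_j) \in D^b(X)$ and the values of the Serre functor \cref{eqn:serre} on them are all supported at $\{p\}$. Hence everything can be computed after restricting to any sufficiently small Zariski neighborhood of $p$.

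Concretely, I would choose a Zariski open $U \ni p$ such that $\pi^{-1}(U) = V_1 \sqcup V_2$ with $q_i \in V_i$ and each $V_i$ smooth and connected; such $U$ exists because $\pi$ is a finite isomorphism off $p$ and the $q_i$ are distinct smooth points of $\widetilde X$. On $U$ we have $\pi_*\O_{\widetilde X}|_U = \pi_*\O_{V_1} \oplus \pi_*\O_{V_2}$, and the restriction $\calA|_U = \End_U(\O_U \oplus \pi_*\O_{V_1} \oplus \pi_*\O_{V_2})$ admits exactly the same three-vertex quiver presentation with relations $q_x i_y = 0 = q_y i_x$ as in \cref{subsec:flop}, the only difference being that the smooth curves $V_i$ replace $\A^1_x, \A^1_y$. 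Under this identification $E_1, E_2$ correspond to the simple modules at the two branch vertices.

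With this in hand, part (\ref{itm:exc}) is immediate from the projective resolutions \cref{eqn:res}, which apply verbatim in the local model: applying $\Hom_\calA(-, E_i)$ to the resolution of $E_i$ leaves only the rank-one contribution from $\Hom(P_{x/y}, E_{x/y}) = \C$, giving $\RHom_\calA(E_i, E_i) = \C$. For part (\ref{itm:serre}), apply \cref{eqn:serre} by dualizing the projective resolution of $E_1$ first over $\calA$ and then over $\O_X$. Since a planar node is a hypersurface singularity, $X$ is Gorenstein at $p$, and $\Hom_X(\O_p, \O_X) = \O_p[-1]$; the resulting shift by $1$, combined with the swap of branch vertices inherent in dualizing the resolution, yields $S(E_1) = E_2[1]$, exactly as recorded for $E_x, E_y$ in \cref{rmk:serre}.

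The main technical obstacle, and essentially the only genuine one, is verifying that the construction \cref{eqn:serre} is compatible with Zariski restriction, so that one may legitimately replace $\calA$ by $\calA|_U$ when computing $S(E_i)$. This is routine because all relevant sheaves are supported in $\{p\} \subset U$ and the functors $\Hom_\calA(-, \calA)$ and $\Hom_X(-, \O_X)$ preserve supports, but it deserves to be recorded explicitly before invoking the local computation.
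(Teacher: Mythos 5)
Your central reduction---choosing a Zariski open $U\ni p$ with $\pi^{-1}(U)=V_1\sqcup V_2$---does not exist when the normalization is irreducible, which is the typical case the proposition is designed to cover. Take $X$ to be the nodal cubic $\{y^2=x^2(x+1)\}$, with normalization $\widetilde X=\A^1$ via $t\mapsto(t^2-1,\,t(t^2-1))$; the two preimages of the node are $t=\pm1$. Every Zariski open of $X$ containing the node pulls back to a nonempty Zariski open of $\A^1$, which is connected, so $\pi^{-1}(U)$ never disconnects. Hence $\pi_*\O_{\widetilde X}|_U$ does not split, $\calA|_U$ still has exactly two indecomposable projectives $P_X$ and $P_{\widetilde X}$, and the three-vertex quiver presentation with relations $q_xi_y=0=q_yi_x$ from \cref{sec:flop} simply does not apply. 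The ``local model'' you want to reduce to is available only formally or \'etale-locally, not in the Zariski topology, so the proposal's main step fails.

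The paper's proof sidesteps this by working directly with the two-projective algebra $\calA=\End_X(\O_X\oplus\pi_*\O_{\widetilde X})$ over an affine chart: it invokes the Hom-spaces from \cite[Proposition 2.2]{BD11} (in particular $\Hom(P_{\widetilde X},P_X)=\calI$, the conductor), writes $\calI=\O_{\widetilde X}\cdot g$ with $g=s_1s_2$ where $s_i$ cut out the two preimage points as divisors on $\widetilde X$, and constructs explicit length-two projective resolutions of $E_1$ and $E_2$ using only $P_X$ and $P_{\widetilde X}$. Parts (\ref{itm:exc}) and (\ref{itm:serre}) are then read off by applying $\Hom_\calA(-,-)$ and $\Hom_X(-,\O_X)$ to those resolutions, with the expected shift coming from $\Hom_X(\O_p,\O_X)=\O_p[-1]$ as you noted. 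Your supporting observation that the computation is local near $p$ because the $E_i$ are supported there is correct, and one could plausibly repair the argument by passing to formal completion at $p$ (where the node genuinely becomes $\C[[x,y]]/(xy)$ and $\calA$ completes to the three-vertex algebra), but this needs an explicit verification that the $\O_X$-linear dual $\Hom_X(-,\O_X)$ in \cref{eqn:serre} commutes with completion; as written, the key localization claim is false.
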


\begin{proof}
    We assume without loss of generality that $X$ is affine. The algebra
    $\calA$ has projective modules $P_X$ and $P_{\widetilde X}$, with
    \begin{equation} \label{eqn:homs}
        \begin{split}
            \End(P_X) &= \O_X, \qquad \Hom(P_X,P_\tilX) = \O_\tilX, \\
            \End(P_\tilX) &= \O_\tilX, \qquad \Hom(P_\tilX,P_X) = \calI.
        \end{split}
    \end{equation}
    Here $\calI$ is the ideal cutting out $\Sing(X)$, equal to the conductor
    $\Ann_X(\O_\tilX/\O_X)$. This is all from \cite[Proposition 2.2]{BD11}. The
    points $q_i\in\tilX$ are divisors $\{s_i=0\}$, and $g=s_1s_2$ satisfies
    $\calI=\O_\tilX\cdot g$. We then have the following resolutions:
    \begin{align*}
        &P_\tilX \xrightarrow{(-g,\,s_2)^T}
            P_X\oplus P_\tilX \xrightarrow{(1,\,s_1)} P_\tilX \to E_1, \\
        &P_\tilX \xrightarrow{(-g,\,s_1)^T}
            P_X\oplus P_\tilX \xrightarrow{(1,\,s_2)} P_\tilX \to E_2,
    \end{align*}
    with maps understood via \cref{eqn:homs}. Using these (\ref{itm:exc}) is a
    direct computation, and we also find that $(E_i)^{\vee_\calA}=E_j[-2]$,
    where $\{i,j\}=\{1,2\}$ and $E_j$ is viewed as a left-module. Then
    (\ref{itm:serre}) follows from $\Hom_X(\O_p,\O_X)=\O_p[-1]$.
\end{proof}

\begin{corollary}
    For any isolated nodal curve singularity $X$, there is a categorical
    resolution with kernel generated by a single 3-spherical object.
\end{corollary}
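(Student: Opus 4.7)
The plan is to apply the spherical-twist framework of \cref{subsec:flop} to the exceptional pair $(E_1, E_2)$ from the preceding proposition, taking the resolution to be $\scrC = \lperp E_2 \subset D^b(\calA)$. First I would verify that $\scrC$ is a strong categorical resolution, arguing as for $\calA_x$: smoothness of $D^b(\calA) = \Perf(\calA)$ comes from $\gldim \calA = 2$ via \cite[Proposition 3.8]{Lun10}, admissibility of $\lperp E_2$ in a smooth category preserves smoothness by \cite[Theorem 3.24]{LS14}, the $\O_X$-linear structure restricts because $E_2$ is pushed forward from a single closed point, and the relative Serre functor is built from the formal construction \cref{eqn:serre}.

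Next I would transport the argument of \cref{prop:twist} to the pair $(E_1, E_2)$. That proof uses only exceptionality, the Serre relations $S(E_i) = E_{3-i}[1]$, and the $\Ext$ computation $\Ext^*(E_1, E_2) = \C[-2]$; the first two are in hand from the preceding proposition, and the third follows by Serre duality from $S(E_1) = E_2[1]$ together with the $\O_X$-versus-$\C$ duality shift observed in \cref{rmk:serre}. Consequently the twist around $\langle E_1, E_2 \rangle$ in $D^b(\calA)$ is spherical, and by the reasoning of \cref{rmk:crep} the kernel of $\scrC \to D^b(X)$ is generated by $F_1 = \cone(E_1[-1] \to E_2[1])$. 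Sphericality of dimension $3$ then follows from \cite[Theorem 4.1.3]{Bar23} together with \cref{rmk:spher}, or alternatively by a direct computation with the defining triangle.

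The main obstacle I anticipate is ensuring that the kernel description is genuinely local at the node: one must argue that $\ker(D^b(\calA) \to D^b(X))$ acquires no extra generators from the rest of the curve. This should reduce to the affine case of \cref{sec:flop} via the fact that $\calA$ is \'etale-locally the explicit quiver algebra near the node and Morita-trivial elsewhere, but the reduction deserves careful treatment in the global setting.
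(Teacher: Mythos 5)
Your proposal matches the paper's approach: the corollary is presented there as an immediate consequence of the preceding proposition together with the machinery of \cref{subsec:flop}, and you correctly carry this out by taking $\scrC = \lperp E_2$, verifying it is a strong categorical resolution, and identifying $\ker(\pi_*) = \langle F_1 \rangle$ with $F_1$ 3-spherical. One small caution: \cref{rmk:spher} (the identity $\Phi = S$) is only established in the paper via the LG-model for the explicit node $\{xy=0\}$, so for a general nodal curve it is safer to rely on your proposed alternative—a direct computation of $\RHom(F_1,F_1) \simeq \C \oplus \C[-3]$ from the defining triangle, combined with \cite[Theorem 4.1.3]{Bar23}—and your worry about locality in the final paragraph is not a real gap, since the proposition already reduces to the affine case and an isolated node supports the kernel at a single point.
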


Such a resolution is fairly crepant of index 2, but not strongly crepant since
$2\ne0$. This extends the description of \cite{CGL23} for $\dim(X)\ge2$,
continuing the pattern of 2- or 3-spherical objects by parity of
dimension.\footnote{The fact that our resolutions are categorical contractions
in the sense of \cite{KS25} follows from \cite[Theorem 4.8]{BD11}, composing
with the admissible inclusion $D^b(\calA_x)\hookrightarrow D^b(\calA)$.} The
zero-dimensional node also follows this pattern; see \cref{ex:fairzero}.

\begin{remark} \label[remark]{rmk:nodes}
    In particular, fairly crepant resolutions exist for isolated nodal
    singularities of all dimensions. The indices of these resolutions are
    displayed in \cref{fig:index}. Note that the index is 0 in exactly the two
    dimensions where geometric crepant resolutions exist; the constructed
    categorical resolutions are equivalent to the geometric resolutions in
    these cases, and are therefore strongly crepant.
\end{remark}

\begin{table}[ht]
    \caption{Indices of crepancy for categorical resolutions of nodes.}
    \label{fig:index}
    \centering
    \begin{tabular}{|c|c|c|c|c|c|c|c|c|c|c|}
        \hline
        dim   & 0 & 1 & 2 & 3 & 4 & 5 & 6 & 7 & $\cdots$ \\
        \hline
        index & 2 & 2 & 0 & 0 & $-2$ & $-2$ & $-4$ & $-4$ & $\cdots$ \\
        \hline
    \end{tabular}
\end{table}

\begin{remark} \label[remark]{rmk:cusps}
    In \cite{Fie25}, a description of the kernels of certain categorical
    resolutions of simple hypersurface singularities of type $A_2$ is given.
    They appear to be fairly crepant of index $2-d$, where $d$ is the
    dimension.\footnote{In odd dimensions a minor extension of Fietz's results
    is required for this to be proven.} Again, this index vanishes iff a
    geometric crepant resolution exists.
\end{remark}

\section{Landau--Ginzburg models} \label{sec:lg}

Recall that a \emph{Landau--Ginzburg model} (following \cite{Seg11}), at least
on the B-side, is a smooth variety $U$ with a \emph{superpotential}
$W\in H^0(U,\O_U)$, together with an action of a rank one torus $\C^*_R$ on $U$
called the \emph{R-charge}, satisfying
\begin{enumerate}
    \item $\{\pm1\}\subset\C^*_R$ acts trivially on $U$, and
    \item $W$ has weight 2 for the $\C^*_R$ action.
\end{enumerate}
There is then an associated dg-enhanced triangulated category $D^b(U,W)$ of
\emph{matrix factorizations} of $W$, or \emph{B-branes}, whose objects are
$\C^*_R$-equivariant sheaves with an endomorphism $d$ of weight 1 satisfying
$d^2=W$, having a suitable notion of quasi-isomorphism by \cite{Orl12}.

\subsection{The exoflop}

We apply the idea of \cite{Asp15} to our situation as follows. The derived
category of $X=\{xy=0\}$ is equivalent to a category of matrix factorizations
$D^b(\A^3,xyz)$, where the R-charge weights are $|x|=|y|=0$ and $|z|=2$. This
is an instance of Kn\"orrer periodicity (\cite{Kno87}, cf. \cite{Shi12}), which
relates the derived category of a complete intersection---or more generally
factorizations on a complete intersection---to factorizations on the associated
vector bundle. Now $\widetilde X$ lies inside the blowup
$\Tot_{\P^1_{x:y}}\O(-1)$ of the plane, cut out by the section $xy$ of $\O(2)$,
and by Kn\"orrer periodicity it is also derived equivalent to an LG-model. We
get a square
\begin{equation*}
    \begin{tikzcd}
        D^b(X) \ar[r,"\simeq"] & D^b(\A^3,xyz) \\
        D^b(\widetilde X) \ar[u] \ar[r,"\simeq"] &
        D^b(\Tot_{\P^1}(\O(-1)\oplus\O(-2)),xyz), \ar[u]
    \end{tikzcd}
\end{equation*}
where $z$ is the coordinate on $\O(-2)$. This LG-model has a weighted flip:
\begin{equation*}
    \Tot_{\P^{1:1}_{x:y}}(\O(-1)_w\oplus\O(-2)_z) \dashrightarrow
        \overline U\coloneqq\Tot_{\P^{1:2}_{w:z}}\O(-1)^2_{x,y},
\end{equation*}
with $\P^{1:2}$ viewed as an orbifold. By \cite{BFK19} there is a decomposition
\begin{equation} \label{eqn:sod}
    D^b(\overline U,xyz)
        = \langle D^b(\pt),D^b(\widetilde X)\rangle,
\end{equation}
where the exceptional object is the sheafy matrix factorization
$\O_{\P^{1:2}}(-1)$ on the zero section.\footnote{By ``sheafy matrix
factorization'' we mean that the differential $d$ is zero.} Here
$D^b(\widetilde X)$ is embedded via Kn\"orrer periodicity followed by the flip.

The key observation is that $\overline U$ is a partial compactification of the
original LG-model $(\A^3,xyz)$, which is the open set $\{w\ne0\}$. Restriction
to this open set gives a functor $D^b(\overline U,xyz)\to D^b(X)$, and the
adjoint pushforward is defined for factorizations supported at $\{z=0\}$, which
corresponds to $\Perf(X)$ under Kn\"orrer periodicity. Restriction and
pushforward compose to the identity, and $D^b(\overline U,xyz)$ is smooth (from
compactification of the critical locus, or just \cref{eqn:sod}), so this is a
categorical resolution. It can be checked to be equivalent to
$D^b(\calA)$.\footnote{A suitable generator is
$\O_{\{x=0\}}\oplus\O_{\{y=0\}}\oplus\O_{\{z=0\}}$.}

\begin{remark}
    Notice how three completely different constructions---that of \cite{KL15},
    \cite{BD11}, and \cite{Asp15}---all produce the same categorical resolution
    $D^b(\calA)$. This kind of coincidence can be seen as evidence for the
    conjectural existence of a universal minimal categorical resolution, even
    though we expect there to be no strongly crepant resolution in this case.
    One slightly odd point is that the non-minimal resolution $D^b(\calA)$
    seems to crop up more naturally than the minimal one $D^b(\calA_x)$.
\end{remark}

\subsection{Removing orbifold points}

To better understand these LG-models, we should consider the critical locus of
the superpotential (\cref{fig:crit}). For $(\A^3,xyz)$ this is the node itself
in the plane $\{z=0\}$, together with a vertical $\A^1_z$ branch through the
singularity, with positive R-charge. This indicates the unbounded Ext groups
supported at the singularity of $X$; indeed, the open set $\{z\ne0\}$ of the
LG-model is equivalent by Kn\"orrer periodicity to $D^b(\A^1_z-\{0\})$, where
$|z|=2$ is positively graded. The non-trivial part of the compactification lies
in this open set, and if we apply Kn\"orrer periodicity to
$\{z\ne0\}\subset\overline U$ we instead get $D^b([\A^1_w/\Z_2])$, because the
branch $\A^1_z$ has been compactified to an orbifold $\P^{1:2}$. This
compactification of the positively graded part of the critical locus is why we
get a relatively proper categorical resolution.

A more obvious compactification just uses $\P^1$. We can achieve it
algebraically by removing an exceptional object, motivated by the decomposition
$D^b([\A^1_w/\Z_2])=\langle\O_0,\O\rangle=\langle D^b(\pt),D^b(\A^1_{w^2})\rangle$
for the chart $\{z\ne0\}$. There are two choices, since there are two twists of
the skyscraper sheaf in $[\A^1/\Z_2]$, giving two exceptional objects
$\O_{\{x=w=0\}}$ and $\O_{\{y=w=0\}}$ in $D^b(\overline U,xyz)$. Their
orthogonal complements give two smaller categorical resolutions, matching
\cref{sec:flop}.

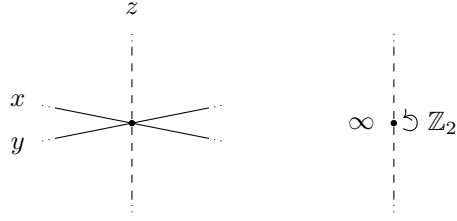
\begin{figure}[ht]
    \centering
    \begin{tikzpicture}
        \draw (-1,-0.2) -- (1,0.2);
        \draw[dotted] (1,0.2) -- (1.2,0.24);
        \draw[dotted] (-1,-0.2) -- (-1.2,-0.24);
        \node at (-1.5,-0.3) {$y$};

        \draw (-1,0.2) -- (1,-0.2);
        \draw[dotted] (1,-0.2) -- (1.2,-0.24);
        \draw[dotted] (-1,0.2) -- (-1.2,0.24);
        \node at (-1.5,0.3) {$x$};

        \draw[dashed] (0,-1) -- (0,1);
        \draw[dotted] (0,-1) -- (0,-1.2);
        \draw[dotted] (0,1) -- (0,1.2)
            node[label=above:{$z$}] {};

        \filldraw (0,0) circle (1pt);
    \end{tikzpicture} \qquad \qquad
    \begin{tikzpicture}
        \draw[dashed] (0,-1) -- (0,1);
        \draw[dotted] (0,-1) -- (0,-1.2);
        \draw[dotted] (0,1) -- (0,1.2);

        \node[label=left:{$\infty$}] at (0,0) (orb) {};
        \filldraw (orb) circle (1pt);
        \draw[->] (orb) edge[out=-30, in=30, looseness=5] (orb);
        \node[label=right:{$\Z_2$}] at (0.2,0) {};
    \end{tikzpicture}
    \caption{The critical locus of $(\A^3,xyz)$, and a chart near $z=\infty$
        for the partial compactification with an orbifold point.}
    \label{fig:crit}
\end{figure}

\begin{remark} \label[remark]{rmk:small}
    The smaller resolutions are also equivalent to factorizations on another
    LG-model: $(\Tot_{\P^1_{w:z}}(\O_x\oplus\O(-1)_y),xyz)$. Here the critical
    locus is the obvious compactification. This LG-model description appears in
    \cite{LP23}.
\end{remark}

\begin{remark} \label[remark]{rmk:root}
    These descriptions suggest the following distillation of the essence of the
    flop: for a smooth point $p$ on a curve $C$, the derived category of the
    root stack $\sqrt[2]{p/C}$ has an exceptional object, the skyscraper sheaf
    at $p$, with orthogonal complement the pullup of $D^b(C)$. It can be
    twisted by the $\Z_2$-isotropy, giving a different subcategory equivalent
    to $D^b(C)$ and a diagram like \cref{eqn:flop}. The flop-flop
    autoequivalence on $D^b(C)$ is then $-\otimes\O(p)$. We are looking at an
    analogue of this construction for the non-commutative curve $\calA_x$.

    This is an example of the general principle that root stacks behave a lot
    like blowups, especially when viewed through the lens of derived
    categories; see \cite{BLS16}.
\end{remark}

\begin{example}
    When $C=\A^1$ and $p=0$ the root stack is $[\A^1/\Z_2]$, which was the
    affine patch at infinity for the critical locus of the LG-model considered
    above.
\end{example}

\subsection{Canonical bundles}

The Serre functor on the LG-model is just given by the canonical bundle
$\O(-1)$, by a local version of \cite[Theorem 1.2]{FK17}. Weak crepancy holds
since the line bundle is trivial near $\{z=0\}$, using \cref{rmk:weak} together
with the fact that $\Perf(X)$ corresponds to factorizations supported at
$\{z=0\}$. Of course, this local condition does not imply global triviality
(i.e. strong crepancy). The orbifold point at $\{z=\infty\}$ locally obstructs
strong crepancy, contributing a local non-triviality to the canonical bundle,
but there is also the global obstruction that the toric compactifications of
$(\A^3,xyz)$ are not Calabi--Yau, so the smaller resolution is still not
strongly crepant (cf. \cref{rmk:small}).

The kernel subcategory consists of factorizations which restrict to zero on the
open set $\{z\ne0\}$, i.e. those with support at $\{z=\infty\}$, and so fair
crepancy is equivalent to local triviality of the canonical bundle near
$\{z=\infty\}$. This fails only for the orbifold compactification, where the
isotropy group gives an obstruction.

\begin{remark} \label[remark]{rmk:div}
    As a line bundle, the Serre functor is the inverse twist along the
    spherical functor given by the divisor $\{w=0\}$ with ideal sheaf $\O(-1)$.
    The source category is the restricted LG-model $D^b([\A^2/\Z_2],xy)$, which
    is equivalent by Kn\"orrer periodicity to
    $D^b([\pt/\Z_2])=D^b(\pt)\oplus D^b(\pt)$, recovering the spherical functor
    $F$.\footnote{Some care is needed for R-charge here. To invert $z$ we
    re-gauge so that $|z|=0$, giving $|x|=|y|=1$. These odd weights are
    possible because of the $\Z_2$-action.}
\end{remark}

\subsection{The A-side} \label{subsec:wrap}

Let us briefly sketch an equivalent picture under homological mirror symmetry.
For more details, the reader should consult \cite{LP18} and \cite{LP23}.

Our starting point is a standard result, that the LG-model $(\A^3,xyz)$ is
equivalent to a wrapped Fukaya category $\W(\Sigma)$, where $\Sigma$ is the
pair-of-pants surface.\footnote{In order to grade $\W(\Sigma)$ a line field
must be chosen, whose winding numbers around the three punctures of $\Sigma$
correspond to the R-charge weights of $x$, $y$ and $z$; see \cite{Seg21}.}
It was further shown in \cite{LP18} that the categorical resolution
$D^b(\calA)$ is equivalent to a \emph{partially} wrapped Fukaya category
$\W(\Sigma;\Lambda)$, where two stops $\Lambda=\{s_x,s_y\}$ have been added to
one of the boundary components (the one with positive grading). Generators
realizing the equivalence with $D^b(\calA)$ are horizontal lines in
\cref{fig:gens}. The forgetful functor $\W(\Sigma;\Lambda)\to\W(\Sigma)$ is a
localization, which contracts two exceptional objects ($E_x$ and $E_y$) given
by small arcs bounding the two stops (also depicted).

The two smaller resolutions then correspond to removing just one of these
stops, giving the equivalent categories $\W(\Sigma;s_x)$ and $\W(\Sigma;s_y)$,
with $\W(\Sigma;\Lambda)$ providing a roof via the forgetful functors. Adjoint
to these forgetful functors are the inclusions as admissible subcategories,
which can be described on objects (purely algebraically) by choosing a lift to
the roof, and then projecting to the orthogonal complement of the relevant
exceptional object via a mapping cone. This reproduces the spherical twists of
\cref{sec:flop}; the arcs near the stop in the single-stopped case are
spherical objects, and in the double-stopped case they are the exceptional
objects of \cref{prop:twist}.

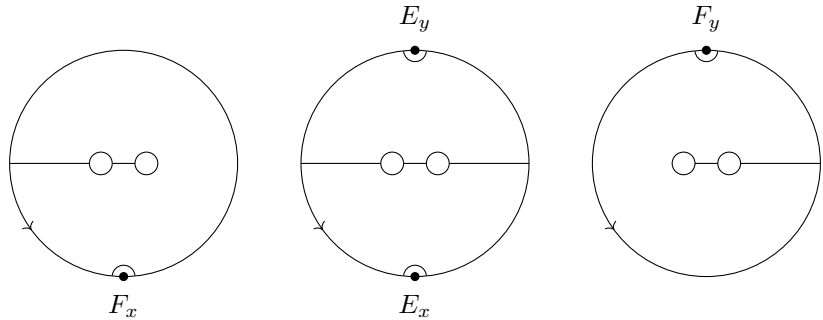
\begin{figure}[ht]
    \centering
    \begin{tikzpicture}[baseline={(0,0)},scale=1.5]
        \draw ([shift=(5:0.1)]0,-1) arc (5:175:0.1);

        \node[label=below:{$F_x$}] at (0,-1) (Lx) {};

        \draw (-1,0) -- (-0.3,0);
        \draw (-0.1,0) -- (0.1,0);

        \draw[
            decoration={markings,mark=at position 0.6 with {\arrow{>}}},
            postaction={decorate}] (0, 0) circle (1);
        \draw (-0.2,0) circle (0.1);
        \draw (0.2,0) circle (0.1);
        \filldraw (Lx) circle (1pt);
    \end{tikzpicture} \qquad
    \begin{tikzpicture}[baseline={(0,0)},scale=1.5]
        \draw ([shift=(-5:0.1)]0,1) arc (-5:-175:0.1);
        \draw ([shift=(5:0.1)]0,-1) arc (5:175:0.1);

        \node[label={$E_y$}] at (0,1) (Ly) {};
        \node[label=below:{$E_x$}] at (0,-1) (Lx) {};

        \draw (-1,0) -- (-0.3,0);
        \draw (-0.1,0) -- (0.1,0);
        \draw (0.3,0) -- (1,0);

        \draw[
            decoration={markings,mark=at position 0.6 with {\arrow{>}}},
            postaction={decorate}] (0, 0) circle (1);
        \draw (-0.2,0) circle (0.1);
        \draw (0.2,0) circle (0.1);
        \filldraw (Ly) circle (1pt);
        \filldraw (Lx) circle (1pt);
    \end{tikzpicture} \qquad
    \begin{tikzpicture}[baseline={(0,0)},scale=1.5]
        \draw ([shift=(-5:0.1)]0,1) arc (-5:-175:0.1);

        \node[label={$F_y$}] at (0,1) (Ly) {};

        \draw (-0.1,0) -- (0.1,0);
        \draw (0.3,0) -- (1,0);

        \draw[
            decoration={markings,mark=at position 0.6 with {\arrow{>}}},
            postaction={decorate}] (0, 0) circle (1);
        \draw (-0.2,0) circle (0.1);
        \draw (0.2,0) circle (0.1);
        \filldraw (Ly) circle (1pt);
    \end{tikzpicture}
    \caption{Objects in $\W(\Sigma;s_x)$, $\W(\Sigma;\Lambda)$ and
        $\W(\Sigma;s_y)$.}
    \label{fig:gens}
\end{figure}

These spherical twists are examples of the ``wrap-once'' autoequivalence for a
swappable stop from \cite{Syl19}, which is always the twist for a natural
associated spherical functor---the Orlov functor---and is defined in terms of a
``swapping'' self-isotopy of the stopping divisor. This is a full circuit
around the boundary for our single-stopped surface, but can be a half twist in
the double-stopped case (swapping the two stops), which explains why one
autoequivalence is the square of the other.

\printbibliography

@article {BKS,
    shorthand={BKS},
    AUTHOR = {Bondal, Alexey and Kapranov, Mikhail and Schechtman, Vadim},
     TITLE = {Perverse schobers and birational geometry},
   JOURNAL = {Selecta Math. (N.S.)},
  FJOURNAL = {Selecta Mathematica. New Series},
    VOLUME = {24},
      YEAR = {2018},
    NUMBER = {1},
     PAGES = {85--143},
     eprint={1801.08286},
      archivePrefix={arXiv},
}

@article {Kuz08,
    shorthand = {Kuz1},
    AUTHOR = {Kuznetsov, A.},
     TITLE = {Lefschetz decompositions and categorical resolutions of
              singularities},
   JOURNAL = {Selecta Math. (N.S.)},
  FJOURNAL = {Selecta Mathematica. New Series},
    VOLUME = {13},
      YEAR = {2008},
    NUMBER = {4},
     PAGES = {661--696},
     eprint={math/0609240},
      archivePrefix={arXiv},
}

@article {Asp15,
    shorthand = {Asp},
    AUTHOR = {Aspinwall, P.},
     TITLE = {Exoflops in two dimensions},
   JOURNAL = {J. High Energy Phys.},
  FJOURNAL = {Journal of High Energy Physics},
      YEAR = {2015},
    NUMBER = {7},
     PAGES = {104, front matter+19},
     eprint={1412.0612},
      archivePrefix={arXiv},
}

@article {KL15,
    shorthand = {KL},
    AUTHOR = {Kuznetsov, A. and Lunts, V. A.},
     TITLE = {Categorical resolutions of irrational singularities},
   JOURNAL = {Int. Math. Res. Not. IMRN},
  FJOURNAL = {International Mathematics Research Notices. IMRN},
      YEAR = {2015},
    NUMBER = {13},
     PAGES = {4536--4625},
     eprint={1212.6170},
      archivePrefix={arXiv},
}

@inproceedings {BO,
    shorthand = {BO},
    AUTHOR = {Bondal, A. and Orlov, D.},
     TITLE = {Derived categories of coherent sheaves},
 BOOKTITLE = {Proceedings of the {I}nternational {C}ongress of
              {M}athematicians, {V}ol. {II} ({B}eijing, 2002)},
     PAGES = {47--56},
 PUBLISHER = {Higher Ed. Press, Beijing},
      YEAR = {2002},
      eprint={math/0206295},
      archivePrefix={arXiv},
}

@article {Seg11,
    shorthand = {Seg1},
    AUTHOR = {Segal, E.},
     TITLE = {Equivalence between {GIT} quotients of {L}andau-{G}inzburg
              {B}-models},
   JOURNAL = {Comm. Math. Phys.},
  FJOURNAL = {Communications in Mathematical Physics},
    VOLUME = {304},
      YEAR = {2011},
    NUMBER = {2},
     PAGES = {411--432},
     eprint={0910.5534},
      archivePrefix={arXiv},
}

@article {LP18,
    shorthand={LP1},
    AUTHOR = {Lekili, Y. and Polishchuk, A.},
     TITLE = {Auslander orders over nodal stacky curves and partially
              wrapped {F}ukaya categories},
   JOURNAL = {J. Topol.},
  FJOURNAL = {Journal of Topology},
    VOLUME = {11},
      YEAR = {2018},
    NUMBER = {3},
     PAGES = {615--644},
     eprint={1705.06023},
      archivePrefix={arXiv},
}

@article {Orl12,
    shorthand={Orl},
    AUTHOR = {Orlov, D.},
     TITLE = {Matrix factorizations for nonaffine {LG}-models},
   JOURNAL = {Math. Ann.},
  FJOURNAL = {Mathematische Annalen},
    VOLUME = {353},
      YEAR = {2012},
    NUMBER = {1},
     PAGES = {95--108},
     eprint={1101.4051},
      archivePrefix={arXiv},
}

@article {Lun10,
    shorthand = {Lun},
    AUTHOR = {Lunts, V. A.},
     TITLE = {Categorical resolution of singularities},
   JOURNAL = {J. Algebra},
  FJOURNAL = {Journal of Algebra},
    VOLUME = {323},
      YEAR = {2010},
    NUMBER = {10},
     PAGES = {2977--3003},
     eprint={0905.4566},
      archivePrefix={arXiv},
}

@article {Shi12,
    shorthand = {Shi},
    AUTHOR = {Shipman, I.},
     TITLE = {A geometric approach to {O}rlov's theorem},
   JOURNAL = {Compos. Math.},
  FJOURNAL = {Compositio Mathematica},
    VOLUME = {148},
      YEAR = {2012},
    NUMBER = {5},
     PAGES = {1365--1389},
     eprint={1012.5282},
      archivePrefix={arXiv},
}

@article {Kno87,
    shorthand = {Kn\"o},
    AUTHOR = {Kn\"orrer, H.},
     TITLE = {Cohen-{M}acaulay modules on hypersurface singularities. {I}},
   JOURNAL = {Invent. Math.},
  FJOURNAL = {Inventiones Mathematicae},
    VOLUME = {88},
      YEAR = {1987},
    NUMBER = {1},
     PAGES = {153--164},
}

@incollection {VdB,
    shorthand = {VdB},
    AUTHOR = {Van den Bergh, M.},
     TITLE = {Non-commutative crepant resolutions},
 BOOKTITLE = {The legacy of {N}iels {H}enrik {A}bel},
     PAGES = {749--770},
 PUBLISHER = {Springer, Berlin},
      YEAR = {2004},
      eprint={math/0211064},
      archivePrefix={arXiv},
}

@article {BD11,
    shorthand = {BD},
    AUTHOR = {Burban, I. and Drozd, Y.},
     TITLE = {Tilting on non-commutative rational projective curves},
   JOURNAL = {Math. Ann.},
  FJOURNAL = {Mathematische Annalen},
    VOLUME = {351},
      YEAR = {2011},
    NUMBER = {3},
     PAGES = {665--709},
     eprint={0905.1231},
      archivePrefix={arXiv},
}

@article {LP23,
    shorthand={LP2},
    AUTHOR = {Lekili, Y. and Polishchuk, A.},
     TITLE = {Homological mirror symmetry for the symmetric squares of
              punctured spheres},
   JOURNAL = {Adv. Math.},
  FJOURNAL = {Advances in Mathematics},
    VOLUME = {418},
      YEAR = {2023},
     PAGES = {Paper No. 108942, 63},
     eprint={2105.03936},
      archivePrefix={arXiv},
}

@thesis{Bar23,
    shorthand = {Bar},
    author = {Barbacovi, F.},
    title = {Flop-flop autoequivalences and compositions of spherical twists},
    type = {phdthesis},
    institution = {University College London},
    year = {2023},
}

@article {Kaw,
    shorthand = {Kaw},
    AUTHOR = {Kawamata, Y.},
     TITLE = {{$D$}-equivalence and {$K$}-equivalence},
   JOURNAL = {J. Differential Geom.},
  FJOURNAL = {Journal of Differential Geometry},
    VOLUME = {61},
      YEAR = {2002},
    NUMBER = {1},
     PAGES = {147--171},
     eprint={math/0205287},
      archivePrefix={arXiv},
}

@incollection {Kuz16,
    shorthand = {Kuz2},
    AUTHOR = {Kuznetsov, A.},
     TITLE = {Derived categories view on rationality problems},
 BOOKTITLE = {Rationality problems in algebraic geometry},
    SERIES = {Lecture Notes in Math.},
    VOLUME = {2172},
     PAGES = {67--104},
 PUBLISHER = {Springer, Cham},
      YEAR = {2016},
      eprint={1509.09115},
      archivePrefix={arXiv},
}

@article {CGL23,
    shorthand = {CGL$^+$},
    AUTHOR = {Cattani, W. and Giovenzana, F. and Liu, S. and Magni, P. and
              Martinelli, L. and Pertusi, L. and Song, J.},
     TITLE = {Kernels of categorical resolutions of nodal singularities},
   JOURNAL = {Rend. Circ. Mat. Palermo (2)},
  FJOURNAL = {Rendiconti del Circolo Matematico di Palermo. Second Series},
    VOLUME = {72},
      YEAR = {2023},
    NUMBER = {6},
     PAGES = {3077--3105},
     eprint={2209.12853},
      archivePrefix={arXiv},
}

@article {AL17,
    shorthand = {AL},
    AUTHOR = {Anno, R. and Logvinenko, T.},
     TITLE = {Spherical {DG}-functors},
   JOURNAL = {J. Eur. Math. Soc. (JEMS)},
  FJOURNAL = {Journal of the European Mathematical Society (JEMS)},
    VOLUME = {19},
      YEAR = {2017},
    NUMBER = {9},
     PAGES = {2577--2656},
     eprint={1309.5035},
      archivePrefix={arXiv},
}

@article {BLS16,
    shorthand = {BLS},
    AUTHOR = {Bergh, D. and Lunts, V. A. and Schn\"urer, O. M.},
     TITLE = {Geometricity for derived categories of algebraic stacks},
   JOURNAL = {Selecta Math. (N.S.)},
  FJOURNAL = {Selecta Mathematica. New Series},
    VOLUME = {22},
      YEAR = {2016},
    NUMBER = {4},
     PAGES = {2535--2568},
     eprint={1601.04465},
      archivePrefix={arXiv},
}

@article {FK17,
    shorthand={FK},
    AUTHOR = {Favero, D. and Kelly, T. L.},
     TITLE = {Fractional {C}alabi-{Y}au categories from {L}andau-{G}inzburg
              models},
   JOURNAL = {Algebr. Geom.},
  FJOURNAL = {Algebraic Geometry},
    VOLUME = {5},
      YEAR = {2018},
    NUMBER = {5},
     PAGES = {596--649},
     eprint={1610.04918},
      archivePrefix={arXiv},
}

@article {KS25,
    shorthand={KS2},
    AUTHOR = {Kuznetsov, A. and Shinder, E.},
     TITLE = {Homologically finite-dimensional objects in triangulated
              categories},
   JOURNAL = {Selecta Math. (N.S.)},
  FJOURNAL = {Selecta Mathematica. New Series},
    VOLUME = {31},
      YEAR = {2025},
    NUMBER = {2},
     PAGES = {Paper No. 27, 45},
     eprint={2211.09418},
      archivePrefix={arXiv},
}

@article {LS14,
    shorthand = {LS},
    AUTHOR = {Lunts, V. A. and Schn\"urer, O. M.},
     TITLE = {Smoothness of equivariant derived categories},
   JOURNAL = {Proc. Lond. Math. Soc. (3)},
  FJOURNAL = {Proceedings of the London Mathematical Society. Third Series},
    VOLUME = {108},
      YEAR = {2014},
    NUMBER = {5},
     PAGES = {1226--1276},
     eprint={1205.3132},
      archivePrefix={arXiv},
}

@incollection {Iya08,
    shorthand={Iya},
    AUTHOR = {Iyama, O.},
     TITLE = {Auslander-{R}eiten theory revisited},
 BOOKTITLE = {Trends in representation theory of algebras and related
              topics},
    SERIES = {EMS Ser. Congr. Rep.},
     PAGES = {349--397},
 PUBLISHER = {Eur. Math. Soc., Z\"urich},
      YEAR = {2008},
      eprint={0803.2841},
      archivePrefix={arXiv},
}

@article {BIKR08,
    shorthand = {BIKR},
    AUTHOR = {Burban, I. and Iyama, O. and Keller, B. and Reiten, I.},
     TITLE = {Cluster tilting for one-dimensional hypersurface
              singularities},
   JOURNAL = {Adv. Math.},
  FJOURNAL = {Advances in Mathematics},
    VOLUME = {217},
      YEAR = {2008},
    NUMBER = {6},
     PAGES = {2443--2484},
     eprint={0704.1249},
      archivePrefix={arXiv},
}

@misc{Sun22,
    shorthand={Sun},
    title={Remarks on the Bondal quiver},
    author={Sung, B.},
    year={2022},
    eprint={2210.08638},
    archivePrefix={arXiv},
    note={Preprint}
}

@incollection {Leu12,
    shorthand = {Leu},
    AUTHOR = {Leuschke, G. J.},
     TITLE = {Non-commutative crepant resolutions: scenes from categorical
              geometry},
 BOOKTITLE = {Progress in commutative algebra 1},
     PAGES = {293--361},
 PUBLISHER = {de Gruyter, Berlin},
      YEAR = {2012},
      eprint={1103.5380},
      archivePrefix={arXiv},
}

@article {Kuz22,
    shorthand={Kuz3},
    AUTHOR = {Kuznetsov, A.},
     TITLE = {Simultaneous categorical resolutions},
   JOURNAL = {Math. Z.},
  FJOURNAL = {Mathematische Zeitschrift},
    VOLUME = {300},
      YEAR = {2022},
    NUMBER = {4},
     PAGES = {3551--3576},
     eprint={2101.06943},
      archivePrefix={arXiv},
}

@misc{Fie25,
    shorthand={Fie},
      title={Categorical resolutions of cuspidal singularities}, 
      author={Fietz, C.},
      year={2025},
      eprint={2411.19380},
      archivePrefix={arXiv},
    note={Preprint}
}

@article {KS23,
    shorthand={KS1},
    AUTHOR = {Kuznetsov, A. and Shinder, E.},
     TITLE = {Categorical absorptions of singularities and degenerations},
   JOURNAL = {\'Epijournal G\'eom. Alg\'ebrique},
  FJOURNAL = {\'Epijournal de G\'eom\'etrie Alg\'ebrique. EPIGA},
      YEAR = {2023},
     PAGES = {Art. 12, 42},
     eprint={2207.06477},
      archivePrefix={arXiv},
}

@article {BFK19,
    shorthand = {BFK},
    AUTHOR = {Ballard, Matthew and Favero, David and Katzarkov, Ludmil},
     TITLE = {Variation of geometric invariant theory quotients and derived
              categories},
   JOURNAL = {J. Reine Angew. Math.},
  FJOURNAL = {Journal f\"ur die Reine und Angewandte Mathematik. [Crelle's
              Journal]},
    VOLUME = {746},
      YEAR = {2019},
     PAGES = {235--303},
      eprint={1203.6643},
      archivePrefix={arXiv},
}

@misc{Seg21,
    shorthand={Seg2},
      title={Line fields on punctured surfaces and twisted derived categories}, 
      author={Segal, E.},
      year={2021},
      eprint={2106.05745},
      archivePrefix={arXiv},
      note={Preprint},
}

@misc{Syl19,
    shorthand={Syl},
      title={Orlov and Viterbo functors in partially wrapped Fukaya categories}, 
      author={Sylvan, Z.},
      year={2019},
      eprint={1908.02317},
      archivePrefix={arXiv},
      note={Preprint},
}

\end{document}